\newtheorem{theorem}{Theorem}[section]
\newtheorem{lemma}[theorem]{Lemma}
\newtheorem{proposition}[theorem]{Proposition}
\newtheorem{example}[theorem]{Example}
\newtheorem{remark}[theorem]{Remark}
\def\erre{{\rm I\!R}}
\def\enne{{\rm I\!N}}
\def\RR{{\rm I\!R}}
\def\phi{\varphi}
\def\div{\mathop{\rm div}}
\title[Yamabe-type
equations]{Yamabe-type
equations on Carnot groups}
\author{Giovanni Molica Bisci}
\address[G. Molica Bisci]{Dipartimento P.A.U., Universit\`a  degli
Studi Mediterranea di Reggio Calabria, Salita Melissari - Feo di
Vito, 89100 Reggio Calabria, Italy} \email{gmolica@unirc.it}
\author{Du\v san Repov\v s}
\address[D. Repov\v s]{Faculty of Education
 and Faculty of Mathematics and Physics, University of Ljubljana,  1000 Ljubljana, Slovenia  
}
\email{dusan.repovs@guest.arnes.si}
\thanks{{\it 2010 Mathematics Subject Classification.} Primary: 35H20;
Secondary: 43A80, 35J70.}
\keywords{Subelliptic equation; Critical problem; Carnot group; Existence result; Variational methods.}
\thanks{Typeset by \LaTeX}
\begin{document}

\begin{abstract}
This article is concerned with a class of elliptic equations on Carnot groups depending of one real positive parameter and involving a critical nonlinearity.
As a special case of our results we prove the existence of at least one nontrivial solution for a subelliptic critical equation defined on
a smooth and bounded domain $D$ of the {Heisenberg group}
$\mathbb{H}^n=\mathbb{C}^n\times \erre$.
Our approach is based on pure variational methods and locally sequentially weakly lower semicontinuous arguments.
\end{abstract}
\maketitle
%%%%%%%%%%%%%%%%%%%%%%%%%%%%%%%%%%%%%%%%%%%%%%%%%%%%%%%%%%%%%%%%%%%%%%%%%%%%%%%%%%%%%%%%%%%%%%%%%%%%%%%%

\section{Introduction}

In this paper we study the critical boundary value problem
$$(P_{\mu,\lambda}^{g})\,\,\,\,\,\,\,
\left\{
\begin{array}{ll}
-\Delta_{\mathbb{G}} u=\displaystyle \mu|u|^{2^*-2}u+\lambda g(u) &  \mbox{\rm in } D\\
u|_{\partial D}=0, &
\end{array}\right.
$$
where $\Delta_{\mathbb{G}}$ is a sublaplacian on a Carnot group ${\mathbb{G}}$, $2^*$ is the critical Sobolev exponent for $\Delta_{\mathbb{G}}$,
$D$ is a smooth bounded domain of $\mathbb{G}$, $\mu$ and $\lambda$ are real positive parameters, and $g$ denotes a subcritical continuous function.

Equations like $(P_{\mu,\lambda}^{g})$ naturally arise in the study of the Yamabe problem for a CR manifold $(M,g)$. This is the problem of finding a contact form
$\widetilde{g}$ on $M$ with prescribed constant Webster scalar curvature that, as is well-known, is equivalent to finding a positive function $w\in C^{\infty}(M)$ such that
$$(Y)\,\,\,\,\,\,\,\left\{
\begin{array}{ll}
\widetilde{g}=w^{4/(q-2)}g \\
-\Delta_{M}w+k_gw=k_{\widetilde{g}}w^{\frac{q+2}{q-2}},
\end{array}\right.
$$
where $k_g$ and $k_{\widetilde{g}}$ are respectively the scalar curvature of $(M,{g})$ and $(M,\widetilde{g})$, and $\Delta_M$ denotes the Laplace-Beltrami operator on $M$.\par
In particular, let $h_0$ be the standard contact form of the sphere $\mathbb{S}^{2n+1}$. Given a smooth function $\widetilde{\phi}$ on $\mathbb{S}^{2n+1}$, the Webster scalar curvature problem of $\mathbb{S}^{2n+1}$ consists of finding a constant form $h$ conformal to $h_0$ such that the corresponding Webster scalar curvature is $\widetilde{\phi}$.\par
 This problem is equivalent to solving the following equation
\begin{equation}\label{dim0}
c_n\Delta_{h}w(\sigma)+\frac{n(n+1)}{2}w(\sigma)=\widetilde{\phi}(\sigma)w(\sigma)^{\frac{c_n}{2}},\quad\sigma\in \mathbb{S}^{2n+1}
\end{equation}
where $\Delta_{h}$ is the Laplace-Beltrami operator on $(\mathbb{S}^{2n+1},h_0)$ and $c_n:=2(1+{1}/{n}).$\par
Using the Cayley map on the Heisenberg group $\mathbb{H}^n$,
equation \eqref{dim0}, up to a constant, assumes the following form
\begin{equation*}\label{dim00}
-\Delta_{\mathbb{H}^n} u=\phi(\xi) u^{\frac{q+2}{q-2}},\quad\xi\in\mathbb{H}^n
\end{equation*}
where $\Delta_{\mathbb{H}^n}$ is the {Kohn-Laplacian} operator on $\mathbb{H}^n$ and $q:=2n+2$ is the homogeneous dimension of $\mathbb{H}^n$ (see \cite{Lanco} for details).\par
 Hence the study of critical equations on stratified Lie groups is strictly connected to the above problem. However, the greatest part of the literature is devoted to the Heisenberg group $\mathbb{H}^n$, while only few results are known for the general Carnot
setting. For related topics, see, among others, the papers \cite{BK, BFP, GaLa,Lo1, MF, PiVa} and references therein.\par
For instance, in \cite[Theorems 1.1 and 1.2]{citti}, Citti studied the critical semilinear problem
$$(P_{f})\,\,\,\,\,\,\,
\left\{
\begin{array}{ll}
-\Delta_{\mathbb{H}^n} u+a(\xi)u=\displaystyle u^{\frac{q+2}{q-2}}+f(\xi,u) &  \mbox{\rm in } \Omega\\
u|_{\partial \Omega}=0, &
\end{array}\right.
$$
\noindent where $\Omega\subset \erre^{2n+1}$ is a smooth bounded domain, $a\in L^{\infty}(\Omega)$, $q$ is again the homogeneous dimension of $\mathbb{H}^n$, and $f$ denotes a suitable subcritical continuous function. In this case, the main ingredient of the proof was the explicit knowledge of the Sobolev minimizers for the Heisenberg gradient (see the paper of Jerison and Lee \cite{JLe}).\par
In general, by using suitable versions of the classical Concentration-Compactness principle established by Lions in \cite{Lions}, existence results for subelliptic problems can be found
by imposing conditions that permit a comparison of the critical levels for suitable Euler-Lagrange functionals.\par

For instance, Garofalo and Vassilev studied in \cite{GaLa0} the best constant in the Folland-Stein embedding on $\mathbb{G}$, by using
concentration-compactness arguments. Successively, exploiting this result and by means of a deep analysis developed by
Bonfiglioli and Uguzzoni in \cite{BoUg}, Loiudice studied in \cite{Lo1} the existence of positive and sign changing
solutions for critical problems on Carnot groups, extending to this subelliptic
context the results obtained by Br\'{e}zis and Nirenberg in
their pioneering paper \cite{BN}. See, among others, the papers \cite{AG,Cha, GP1, GP2, Zou} and references therein for related results.

It is natural to ask the question whether there are other approaches that give
rise to existence results for critical elliptic equations, and which do not require
the preceding comparison procedure.\par

Along this direction, in this paper (see Theorem \ref{FerraraMolicaBisci2}), by using direct variational methods, we prove
the existence of one weak solution in the Folland-Stein space $S^1_0(D)$ for problem $(P_{\mu,\lambda}^{g})$ without any use of
concentration-compactness techniques, provided that the parameter $\lambda$ is sufficiently small.\par
A key ingredient of the proof of Theorem \ref{FerraraMolicaBisci2} is a weakly lower semicontinuity trick (see Proposition \ref{ls}) previously used in literature for studying quasilinear $p$-Laplacian equations involving critical
nonlinearities in the Euclidean setting (see \cite{Faraci, Squa} and Remark \ref{FaraciFarkas}). More precisely, we prove that, for every $\mu>0$, the restriction of the functional
\begin{equation*}\label{PartFunzionale}
\mathcal{L}_{\mu}(u):=\frac{1}{2}\int_{D}|\nabla_{\mathbb{G}} u(\xi)|^2\,d\xi-\frac{\mu}{2^*}\displaystyle\int_D |u(\xi)|^{2^*}d\xi,\quad \forall\, u\in S^1_0(D)
\end{equation*}
\noindent to a sufficiently small ball in $S^1_0(D)$ is weakly lower semicontinuous.
 Consequently, the energy functional $$\mathcal{J}_{\mu,\lambda}(u):=\mathcal{L}_{\mu}(u)-\lambda\displaystyle\int_D \left(\int_0^{u(\xi)}g(\tau)d\tau\right) d\xi,\quad \forall\, u\in S^1_0(D)$$ associated to $(P_{\mu,\lambda}^{g})$, is locally
sequentially weakly lower semicontinuous, and by direct minimization we can prove
that for any $\mu>0$ and $\lambda$ sufficiently small, the functional $\mathcal{J}_{\mu,\lambda}$ admits a critical point (local minimum), which turns out to be a weak solution of problem $(P_{\mu,\lambda}^{g})$.\par
A special case of our result reads as follows.
\begin{theorem}\label{FerraraMolicaBisci22}
Let $D$ be a smooth and bounded domain of the {Heisenberg group}
$\mathbb{H}^n$ and
$g:\erre\rightarrow\erre$ a continuous function with $g(0)\neq 0$, for which
\begin{itemize}
\item[$(g_\infty')$] there exist $a_1, a_2>0$ and $p\in \left[1, \displaystyle 2\left(\frac{n+1}{n}\right)\right)$ such that
$$
|g(t)|\leq a_1+a_2|t|^{p-1},
$$
for every $t\in \erre$$.$
\end{itemize}
 Then there exists an open interval $\Lambda\subset (0,+\infty)$ such that for every $\lambda\in \Lambda$,
the following subelliptic problem
$$
\left\{
\begin{array}{ll}
-\Delta_{\mathbb{H}^n} u=\displaystyle |u|^{2^*_h-2}u+\lambda g(u) &  \mbox{\rm in } D\\
u|_{\partial D}=0, &
\end{array}\right.
$$
where $2^*_h$ is the critical Sobolev exponent for $\Delta_{\mathbb{H}^n}$,
admits at least one nontrivial weak solution in the {Folland-Stein space} $\mathbb{H}^1_0(D)$.
\end{theorem}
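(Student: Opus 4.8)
The plan is to realize the solution as a local minimum of the energy functional
\[
\mathcal{J}_{1,\lambda}(u)=\frac12\int_D|\nabla_{\mathbb{H}^n}u(\xi)|^2\,d\xi-\frac{1}{2^*_h}\int_D|u(\xi)|^{2^*_h}\,d\xi-\lambda\int_D G(u(\xi))\,d\xi ,\qquad u\in\mathbb{H}^1_0(D),
\]
where $G(t):=\int_0^t g(\tau)\,d\tau$, obtained by direct minimization on a suitably small ball of $\mathbb{H}^1_0(D)$. First I would observe that, since $p<2^*_h=2(n+1)/n$, condition $(g_\infty')$ makes $\mathcal{J}_{1,\lambda}$ well defined and of class $C^1$ on $\mathbb{H}^1_0(D)$, its critical points being exactly the weak solutions of the stated problem. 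Applying Proposition \ref{ls} with $\mu=1$, I would fix $\rho_0>0$ such that the restriction of $\mathcal{L}_1$ to the closed ball $\overline{B}_\rho:=\{u\in\mathbb{H}^1_0(D):\|u\|\le\rho\}$ is sequentially weakly lower semicontinuous for every $\rho\in(0,\rho_0)$. Combined with the compactness of the Folland--Stein embedding $\mathbb{H}^1_0(D)\hookrightarrow L^p(D)$ in the subcritical range — which makes $u\mapsto\int_D G(u)$ sequentially weakly continuous — this gives that $\mathcal{J}_{1,\lambda}=\mathcal{L}_1-\lambda\int_D G(\cdot)$ is sequentially weakly lower semicontinuous on $\overline{B}_\rho$. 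Since $\overline{B}_\rho$ is bounded, closed and convex, hence weakly sequentially compact in the reflexive space $\mathbb{H}^1_0(D)$, the functional $\mathcal{J}_{1,\lambda}$ attains its infimum over $\overline{B}_\rho$ at some $u_\lambda$.

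The next step is to choose $\rho$ and a threshold $\lambda^*$ so that, for $\lambda\in\Lambda:=(0,\lambda^*)$, the minimizer $u_\lambda$ lies in the open ball $B_\rho$ and is nontrivial. From the Folland--Stein inequality $\int_D|u|^{2^*_h}\le S^{-2^*_h/2}\|u\|^{2^*_h}$ (with $S>0$ the Folland--Stein constant), together with $|G(t)|\le a_1|t|+\tfrac{a_2}{p}|t|^p$ and the embeddings $\mathbb{H}^1_0(D)\hookrightarrow L^1(D)\cap L^p(D)$, I get a bound
\[
\mathcal{J}_{1,\lambda}(u)\ \ge\ \frac12\|u\|^2-\frac{1}{2^*_h}S^{-2^*_h/2}\|u\|^{2^*_h}-\lambda\bigl(c_1\|u\|+c_2\|u\|^p\bigr),\qquad u\in\mathbb{H}^1_0(D),
\]
with $c_1,c_2>0$ depending only on $D,a_1,a_2,p$. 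Because $2^*_h>2$, I would first fix $\rho\in(0,\rho_0)$ small enough that $\eta(\rho):=\tfrac12\rho^2-\tfrac{1}{2^*_h}S^{-2^*_h/2}\rho^{2^*_h}>0$, and only then set $\lambda^*:=\eta(\rho)\big/\bigl(2(c_1\rho+c_2\rho^p)\bigr)$. For $\lambda\in\Lambda$ the estimate above yields $\inf_{\|u\|=\rho}\mathcal{J}_{1,\lambda}(u)\ge\tfrac12\eta(\rho)>0$. On the other hand $\mathcal{J}_{1,\lambda}(0)=0$ and, since $g(0)\neq0$, one has $G(t)=g(0)t+o(t)$ as $t\to0$; hence, fixing $\varphi\in C_c^\infty(D)$ with $\varphi\ge0$, $\varphi\not\equiv0$, and $\varepsilon\in\{-1,1\}$ with $\varepsilon\,g(0)>0$, the expansion
\[
\mathcal{J}_{1,\lambda}(s\varepsilon\varphi)=-\lambda\,\varepsilon\,g(0)\,s\int_D\varphi\,d\xi+o(s)\qquad(s\to0^+)
\]
shows that $\mathcal{J}_{1,\lambda}(s\varepsilon\varphi)<0$ for $s>0$ small, with $s\varepsilon\varphi\in B_\rho$. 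Therefore $\inf_{\overline{B}_\rho}\mathcal{J}_{1,\lambda}<0<\inf_{\partial B_\rho}\mathcal{J}_{1,\lambda}$, which forces $u_\lambda\in B_\rho$ and $u_\lambda\neq 0$.

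To conclude, since $u_\lambda$ minimizes $\mathcal{J}_{1,\lambda}$ on $\overline{B}_\rho$ and belongs to the interior $B_\rho$, it is a local minimum of $\mathcal{J}_{1,\lambda}$ on the whole space $\mathbb{H}^1_0(D)$; by the $C^1$ regularity of $\mathcal{J}_{1,\lambda}$ this gives $\mathcal{J}_{1,\lambda}'(u_\lambda)=0$, so $u_\lambda$ is a nontrivial weak solution in $\mathbb{H}^1_0(D)$ of the stated subelliptic problem for every $\lambda\in\Lambda$. I expect the genuine difficulty of the argument to sit entirely in Proposition \ref{ls}, i.e.\ in the weak lower semicontinuity of the critical term $\mathcal{L}_1$ on small balls, which one establishes through a Brezis--Lieb type splitting together with a smallness condition on $\rho$ dictated by the Folland--Stein constant. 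Granting that, the only point in the present proof requiring care is the order in which the parameters are chosen: the radius $\rho$ must be fixed using only the subelliptic Sobolev inequality, and only afterwards may $\lambda$ be shrunk, so that the subcritical perturbation $\lambda\int_D G(\cdot)$ cannot spoil the positivity of $\mathcal{J}_{1,\lambda}$ on $\partial B_\rho$ while still producing negative values inside $B_\rho$.
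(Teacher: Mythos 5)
Your proof is correct, and it reaches the conclusion by a genuinely more elementary route than the paper. The paper does not prove Theorem \ref{FerraraMolicaBisci22} directly: it deduces it from the general Theorem \ref{FerraraMolicaBisci2} (with $\mu=1$, $\mathbb{G}=\mathbb{H}^n$), whose proof runs through the Ricceri-type minimization machinery of Lemmas \ref{lemmino1} and \ref{lemmino2} (imported from \cite{FMR}): one verifies the derivative-type inequality \eqref{limsup} by estimating $\Lambda_{\lambda,\mu}(\varepsilon,\varrho_{0,\mu,\lambda})$ against the rational function $h_\mu$ of \eqref{hfunc}, obtains the strict inequality \eqref{John}, and then argues by contradiction that the minimizer of $\mathcal{J}_{\mu,\lambda}$ on the closed ball cannot sit on the sphere; nontriviality is handled separately in Remark \ref{Notzero} via the observation that $g(0)\neq 0$ prevents $0$ from being a local minimum. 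You replace all of this with a direct ``small-ball geometry'' argument: the same Proposition \ref{ls} plus compactness of the subcritical embedding gives weak lower semicontinuity of $\mathcal{J}_{1,\lambda}$ on $\overline{B}_\rho$, the Folland--Stein inequality gives $\inf_{\partial B_\rho}\mathcal{J}_{1,\lambda}>0$ for $\lambda$ below an explicit threshold, and the expansion $G(t)=g(0)t+o(t)$ produces negative values inside $B_\rho$, so the minimizer is interior and nonzero in one stroke. What you lose relative to the paper is only the sharper, optimized localization of the admissible interval via $\max_{\varrho\ge 0}h_\mu(\varrho)$ (your $\lambda^*$ carries an unoptimized factor of $2$ and unspecified embedding constants $c_1,c_2$); what you gain is a self-contained proof that does not rely on the external Lemmas \ref{lemmino1}--\ref{lemmino2} and that builds the nontriviality directly into the minimization. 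Both arguments hinge on the same two nontrivial inputs --- Proposition \ref{ls} and the finiteness of $\kappa_{2^*}$ --- and your care about fixing $\rho$ before shrinking $\lambda$ is exactly the point that needs attention; you have handled it correctly.
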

The interval $\Lambda$ in the above result can be explicitly localized. More precisely, setting
$$
c_{2^*_h}:=\sup_{u\in \mathbb{H}^1_0(D)\setminus\{0\}}\frac{\left(\displaystyle\int_D |u(\xi)|^{2^*_h}d\xi\right)^{1/2}}{\left(\displaystyle\int_{D}|\nabla_{\mathbb{H}^n} u(\xi)|^2\,d\xi\right)^{1/2}},
$$ one has
$$
\Lambda\subseteq \left(0,\max_{\varrho\geq 0}h(\varrho)\right),
$$
where
$$
h(\varrho):=
\frac{{\varrho}-c_{2^*_h}^{2^*_h}\varrho^{2^*_h-1}}{a_1c_{2^*_h}|D|^{\frac{2^*_h-1}{2^*_h}}+a_2c_{2^*_h}^{p}|D|^{\frac{2^*_h-p}{2^*_h}}\varrho^{p-1}},
$$
and $|D|$ denotes the Lebesgue measure of the smooth and bounded domain $D\subset \mathbb{H}^n$. See Remark \ref{intervalloo} for details.\par
Since the technique used in the paper does not require any Lie group structure the results that we state here are valid for more general operators than the sub-Laplacians on Carnot groups. The Carnot group structure is crucially used along the paper in the facts that the Sobolev constant $\kappa_{2^*}$ given in \eqref{ec} is finite and that the rational function $h_\mu$  introduced in Section \ref{Quattro}, formula \eqref{hfunc}, is well-defined (see also Remark \ref{FaraciFarkas2}).\par
\indent The plan of the paper is as follows. Section \ref{section2} is devoted to our abstract framework and basic definitions. Next, in Section \ref{Section3}, Proposition \ref{ls} and some preparatory results (see Lemmas \ref{lemmino1} and \ref{lemmino2}) are presented. In the last section, Theorem \ref{FerraraMolicaBisci2} is proved and some examples are proved (see Examples \ref{esempiuccio} and \ref{esempiuccio2}).

\section{Abstract Framework}\label{section2}

In this section we briefly recall some basic facts on Carnot groups and the functional space~$S^1_0(D)$. The special case of the Heisenberg group $\mathbb{H}^n$ is considered.
\subsection{Dilatations} Let $(\erre^n,\circ)$ be a Lie group equipped with a family of group automorphisms, namely \textit{dilatations}, $\mathfrak{F}:=\{\delta_\eta\}_{\eta>0}$ such that, for every $\eta>0$, the map
$$
\delta_\eta:\prod_{k=1}^{r}\erre^{n_k}\rightarrow\prod_{k=1}^{r}\erre^{n_k}
$$
is given by
$$
\delta_\eta(\xi^{(1)},...,\xi^{(r)}):=(\eta \xi^{(1)}, \eta^{2}\xi^{(2)},...,\eta^r\xi^{(r)}),
$$
where $\xi^{(k)}\in \erre^{n_k}$ for every $k\in \{1,...,r\}$ and $\displaystyle \sum_{k=1}^{r}n_k=n$.
\subsection{Homogeneous dimension} The structure $\mathbb{G}:=(\erre^n,\circ, \mathfrak{F})$ is called a \textit{homogeneous} group with \textit{homogeneous dimension}
\begin{equation}\label{dim}
\textrm{dim}_h{\mathbb{G}}:=\displaystyle \sum_{k=1}^{r}kn_k.
\end{equation}
\noindent From now on, we shall assume that $\textrm{dim}_h{\mathbb{G}}\geq 3$. We remark that, if $\textrm{dim}_h{\mathbb{G}}\leq 3$, then
necessarily $\mathbb{G}=(\erre^{\textrm{dim}_h{\mathbb{G}}},+)$.
Note that the number $\textrm{dim}_h{\mathbb{G}}$ is naturally associated to the family $\mathfrak{F}$ since, for every $\eta>0$, the Jacobian of the map
$$
\xi\mapsto \delta_\eta(\xi),\quad \forall\,\xi\in \erre^n
$$
equals $\eta^{\textrm{dim}_h{\mathbb{G}}}$.
\subsection{Stratification} Let $\mathfrak{g}$ be the Lie algebra of left invariant vector fields on $\mathbb{G}$ and assume that $\mathfrak{g}$ is \textit{stratified}, that is:
$$
\displaystyle\mathfrak{g}=\bigoplus_{k=1}^{r}V_k,
$$
where the integer $r$ is called the \textit{step} of $\mathbb{G}$, $V_k$ is a linear subspace of $\displaystyle\mathfrak{g}$, for every $k\in \{1,...,r\}$, and
\begin{itemize}
\item[] $\textrm{dim}V_k=n_k$, for every $k\in \{1,...,r\}$;
\item[] $[V_1,V_k]=V_{k+1}$, for $1\leq k\leq r-1$, and $[V_1,V_r]=\{0\}$.
\end{itemize}
In this setting the symbol $[V_1,V_k]$ denotes the subspace of $\mathfrak{g}$ generated by the commutators $[X,Y]$, where $X\in V_1$ and $Y\in V_k$.

\subsection{The notion of Carnot group and subelliptic Laplacian on $\mathbb{G}$} A\textit{Carnot group} is a homogeneous group $\mathbb{G}$ such that the Lie algebra $\mathfrak{g}$ associated to $\mathbb{G}$ is stratified.\par
Moreover, the \textit{subelliptic Laplacian} operator on $\mathbb{G}$ is the second-order differential operator, given by
$$
\Delta_{\mathbb{G}}:=\displaystyle\sum_{k=1}^{n_1}X_k^2,
$$
where $\{X_1,...,X_{n_1}\}$ is a basis of $V_1$. We shall denote by $$\nabla_\mathbb{G}:=(X_1,...,X_{n_1})$$ the
related \textit{horizontal gradient}.
\subsection{Critical Sobolev inequality} A crucial role in the functional analysis on Carnot groups is played by the
following Sobolev-type inequality
\begin{equation}\label{folland}
\int_{D}|u(\xi)|^{2^*}\,d\xi\leq C\int_{D}|\nabla_{\mathbb{G}} u(\xi)|^2\,d\xi,\,\quad\forall\, u\in C^{\infty}_0(D)
\end{equation}
due to Folland (see, for instance, \cite{Fo}). In the above expression $C$ is a positive constant (independent of $u$) and
$$
2^{*}:=\frac{2\textrm{dim}_h{\mathbb{G}}}{\textrm{dim}_h{\mathbb{G}}-2},
$$
is the \textit{critical Sobolev exponent}. Inequality (\ref{folland}) ensures that if $D$ is a bounded open (smooth) subset of $\mathbb{G}$, then the function
\begin{equation}\label{norm}
u\mapsto \|u\|_{S^1_0(D)}:=\left(\int_{D}|\nabla_{\mathbb{G}} u(\xi)|^2\,d\xi\right)^{1/2}
\end{equation}
is a norm in $C^{\infty}_0({D})$.
\subsection{Folland-Stein space} We shall denote by $S^1_0(D)$ the \textit{Folland-Stein space} defined
as the completion of $C^{\infty}_0({D})$ with respect to the norm $\|\cdot\|_{S^1_0(D)}$. The exponent $2^{*}$ is critical for $\Delta_{\mathbb{G}}$ since, as in the classical Laplacian setting, the embedding
$S^1_0(D)\hookrightarrow L^q(D)$ is compact when $1\leq q<2^{*}$, while it is only continuous if $q=2^{*}$, see Folland and Stein \cite{FoSte} and the survey paper \cite{Lanco} for related facts.
\subsection{The Heisenberg group} The simplest example of Carnot
group is provided by the \textit{Heisenberg group}
$\mathbb{H}^n:=(\erre^{2n+1},\circ)$, where, for every
$$
p:=(p_1,...,p_{2n},p_{2n+1})\,\,\,\mbox{and}\,\,\, q:=(q_1,...,q_{2n},q_{2n+1})\in \mathbb{H}^n,
$$
the usual group operation $\circ:\mathbb{H}^n\times \mathbb{H}^n\rightarrow \mathbb{H}^n$ is given by
$$
p\circ q:=\left(p_1+q_1,...,p_{2n}+q_{2n},p_{2n+1}+q_{2n+1}+\frac{1}{2}\displaystyle\sum_{k=1}^{2n}(p_kq_{k+n}-p_{k+n}q_k)\right)
$$
and the family of dilatations has the following form
$$
\delta_\eta(p):=(\eta p_1,...,\eta p_{2n},\eta^2p_{2n+1}),\quad \forall\, \eta>0.
$$
Thus $\mathbb{H}^n$ is a $(2n+1)$-dimensional group and by (\ref{dim}) it follows that
$$
\textrm{dim}_h{\mathbb{H}^n}=2n+2,
$$
and
$$
2_h^{*}:=\displaystyle 2\left(\frac{n+1}{n}\right).
$$
\noindent The Lie algebra of left invariant vector fields on $\mathbb{H}^{n}$ is denoted by $\mathfrak{h}$ and its standard basis
is given by \par
$$
X_k:=\partial_k-\frac{p_{n+k}}{2}\partial_{2n+1},\quad k\in \{1,...,n\}
$$
$$
Y_k:=\partial_{n+k}-\frac{p_{k}}{2}\partial_{2n+1},\quad k\in \{1,...,n\}
$$
$$
T:=\partial_{2n+1}.
$$

\noindent In this case, the only non-trivial commutators relations are
$$
[X_k,Y_k]=T,\quad \forall\, k\in \{1,...,n\}.
$$
Finally, the stratification of $\mathfrak{h}$ is given by
$$
\mathfrak{h}=\textrm{span}\{X_1,...,X_n,Y_1,...,Y_n\}\oplus \textrm{span}\{T\}.
$$
 We denote by $\mathbb{H}^1_0(D)$ the Folland-Stein space in the Heisenberg group setting, and by $\Delta_{\mathbb{H}^n}$ the {Kohn-Laplacian} operator on $\mathbb{H}^n$.\par
 The Heisenberg group plays a fundamental role in several curvature problems for
CR manifolds. Among the most important ones is, as recalled in Introduction, the CR-Yamabe problem, which
was completely solved by Jerison and Lee \cite{JLe}, and by Gamara \cite{gamara} and Gamara-Yacoub \cite{gamara2}.\par
\smallskip
\indent Finally, we cite the monograph \cite{BLU} for a nice introduction to Carnot groups and \cite{MRS} for related topics on variational methods used in this paper.

\section{Some technical Lemmas}\label{Section3}
Let $D$ be a smooth and bounded domain of a {Carnot group}
$\mathbb{G}$ with ${\rm dim}_h{\mathbb{G}}\geq 3$. From now on we shall denote by
\begin{equation}\label{ec}
\kappa_{2^*}:=\sup_{u\in S_0^1(D)\setminus\{0\}}\frac{\left(\displaystyle\int_D |u(\xi)|^{2^*}d\xi\right)^{1/2}}{\left(\displaystyle\int_{D}|\nabla_{\mathbb{G}} u(\xi)|^2\,d\xi\right)^{1/2}},
\end{equation}
the best constant of the continuous Sobolev embedding $S^1_0(D)\hookrightarrow L^{2^*}(D)$.\par
 By using the above notations we shall prove the following local weakly lower semicontinuity property that will be crucial in the sequel.
\begin{proposition}\label{ls}
Let $D$ be a smooth and bounded domain of a {Carnot group}
$\mathbb{G}$ with ${\rm dim}_h{\mathbb{G}}\geq 3$. Furthermore, let
$$\overline{B_{S^1_0(D)}(0,\varrho)}:=\left\{u\in S^1_0(D):\left(\displaystyle\int_{D}|\nabla_{\mathbb{G}} u(\xi)|^2\,d\xi\right)^{1/2}\leq \varrho\right\}$$
be the closed ball centered at $0$ and radius $\varrho>0$ in the {Folland-Stein space} $S^1_0(D)$. Then for every $\mu>0$, there exists $\varrho_{0,\mu}>0$ such that the functional
\begin{equation}\label{PartFunzionale}
\mathcal{L}_{\mu}(u):=\frac{1}{2}\int_{D}|\nabla_{\mathbb{G}} u(\xi)|^2\,d\xi-\frac{\mu}{2^*}\displaystyle\int_D |u(\xi)|^{2^*}d\xi
\end{equation}
is sequentially weakly lower semicontinuous on $\overline{B_{S^1_0(D)}(0,\varrho_{0,\mu})}$.
\end{proposition}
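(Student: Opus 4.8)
The plan is to combine the Br\'ezis--Lieb lemma with the sharp Sobolev inequality \eqref{folland} inside a Hilbert-space decomposition. Fix $\mu>0$ and let $(u_j)_j\subset \overline{B_{S^1_0(D)}(0,\varrho)}$ converge weakly to some $u$ in $S^1_0(D)$; since the closed ball is convex and closed, hence weakly closed, $u$ belongs to it as well. We must show $\mathcal{L}_{\mu}(u)\le \liminf_{j\to\infty}\mathcal{L}_{\mu}(u_j)$, and since it suffices to verify this along an arbitrary subsequence realizing the lower limit, we may freely pass to subsequences throughout.

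First I would set $v_j:=u_j-u$, so that $v_j\rightharpoonup 0$ in $S^1_0(D)$. Because $S^1_0(D)$ is a Hilbert space with inner product $\langle u,v\rangle=\int_D\nabla_{\mathbb{G}}u\cdot\nabla_{\mathbb{G}}v\,d\xi$, weak convergence yields the splitting
$$
\|u_j\|_{S^1_0(D)}^2=\|v_j\|_{S^1_0(D)}^2+\|u\|_{S^1_0(D)}^2+o(1).
$$
On the other hand, the compact embedding $S^1_0(D)\hookrightarrow L^q(D)$ for $1\le q<2^*$ (valid since $D$ is bounded) allows us to assume, after a further subsequence, that $u_j\to u$ almost everywhere in $D$; as $(u_j)_j$ is bounded in $L^{2^*}(D)$ by \eqref{folland}, the Br\'ezis--Lieb lemma gives
$$
\int_D|u_j(\xi)|^{2^*}\,d\xi=\int_D|v_j(\xi)|^{2^*}\,d\xi+\int_D|u(\xi)|^{2^*}\,d\xi+o(1).
$$
Combining the two relations, $\mathcal{L}_{\mu}(u_j)=\mathcal{L}_{\mu}(u)+\mathcal{L}_{\mu}(v_j)+o(1)$, so the whole matter reduces to proving $\liminf_{j\to\infty}\mathcal{L}_{\mu}(v_j)\ge 0$.

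For this I would invoke \eqref{folland} in the form $\int_D|v_j(\xi)|^{2^*}\,d\xi\le \kappa_{2^*}^{2^*}\|v_j\|_{S^1_0(D)}^{2^*}$, so that
$$
\mathcal{L}_{\mu}(v_j)\ge \|v_j\|_{S^1_0(D)}^2\left(\frac{1}{2}-\frac{\mu}{2^*}\kappa_{2^*}^{2^*}\|v_j\|_{S^1_0(D)}^{2^*-2}\right).
$$
From the Hilbert splitting together with weak lower semicontinuity of the norm, $\|v_j\|_{S^1_0(D)}^2=\|u_j\|_{S^1_0(D)}^2-\|u\|_{S^1_0(D)}^2+o(1)\le \varrho^2+o(1)$, whence $\limsup_j\|v_j\|_{S^1_0(D)}^{2^*-2}\le\varrho^{2^*-2}$ (note $2^*>2$ since $\mathrm{dim}_h\mathbb{G}\ge 3$). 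Therefore, choosing
$$
\varrho_{0,\mu}<\left(\frac{2^*}{2\mu\,\kappa_{2^*}^{2^*}}\right)^{1/(2^*-2)},
$$
the bracketed factor is bounded below by a positive constant for all large $j$, so $\mathcal{L}_{\mu}(v_j)\ge 0$ eventually, hence $\liminf_j\mathcal{L}_{\mu}(v_j)\ge 0$ and $\mathcal{L}_{\mu}(u)\le\liminf_j\mathcal{L}_{\mu}(u_j)$, as required.

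The only genuinely delicate point I anticipate is the use of the Br\'ezis--Lieb lemma: it needs almost everywhere convergence, which is available only along a subsequence, so one must run the entire argument within an arbitrary subsequence extracted to realize the lower limit (a standard but necessary precaution). Everything else is a routine Hilbert-space computation; the finiteness of $\kappa_{2^*}$ furnished by \eqref{folland}, i.e.\ by the Carnot group structure, is exactly what makes the threshold $\varrho_{0,\mu}$ well-defined.
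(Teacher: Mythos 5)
Your proof is correct and follows essentially the same route as the paper's: split off $v_j=u_j-u$, apply the Br\'ezis--Lieb lemma to the critical term, control $\int_D|v_j|^{2^*}d\xi$ by the Folland--Stein inequality, and conclude for a sufficiently small radius. The only differences are cosmetic and in your favor: you use the exact Hilbert-space identity $\|u_j\|^2=\|v_j\|^2+\|u\|^2+o(1)$ where the paper invokes Lindqvist's algebraic inequality (losing a factor, i.e.\ $\tfrac14$ in place of $\tfrac12$) and bounds $\|u_j-u_\infty\|\le 2\varrho$ crudely, so your threshold for $\varrho_{0,\mu}$ is larger; you are also more careful than the paper in extracting the subsequence with a.e.\ convergence needed for Br\'ezis--Lieb.
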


\begin{proof}
Let us fix $\mu>0$, let $\varrho>0$ and take a sequence $\{u_j\}_{j\in \enne}\subset \overline{B_{S^1_0(D)}(0,\varrho)}$ weakly convergent to some $u_\infty\in \overline{B_{S^1_0(D)}(0,\varrho)}$. This means that
\begin{equation}\label{convergenze000}
\begin{aligned}
 & \displaystyle\int_{D} \langle\nabla_\mathbb{G}u_j(\xi),\nabla_\mathbb{G}\varphi(\xi)\rangle\,d\xi \to \\
& \qquad \qquad \qquad
\displaystyle\int_{D} \langle\nabla_\mathbb{G}u_\infty(\xi),\nabla_\mathbb{G}\varphi(\xi)\rangle\,d\xi,
\end{aligned}
\end{equation}
for any $\phi\in S_0^1(D)$, as $j\to +\infty$. We will prove that
\begin{equation}\label{convergenzeFine}
L:=\liminf_{j\rightarrow +\infty}(\mathcal{L}_{\mu}(u_j)-\mathcal{L}_{\mu}(u_\infty))\geq 0,
\end{equation}
that is
$$
\begin{aligned}
 & \displaystyle \liminf_{j\rightarrow +\infty}\Bigg\{\frac{1}{2}\Bigg[\int_{D}|\nabla_{\mathbb{G}} u_j(\xi)|^2\,d\xi-\int_{D}|\nabla_{\mathbb{G}} u_\infty(\xi)|^2\,d\xi \Bigg] \\
& \qquad \qquad \qquad\qquad
-\frac{\mu}{2^*}\Bigg[\displaystyle\int_D |u_j(\xi)|^{2^*}d\xi-\displaystyle\int_D |u_\infty(\xi)|^{2^*}d\xi\Bigg]\Bigg\}\geq 0.
\end{aligned}
$$
Now, by using the following algebraic inequality (see \cite[Lemma 4.2]{Lin})
$$
|b|^2-|a|^2\geq 2\langle a,b-a\rangle+\frac{1}{2}|a-b|^2,\,\,\,\,(\forall\, a,b\in\erre^n),
$$
\noindent with the choices
$$
a:=\nabla_{\mathbb{G}} u_\infty(\xi),\,\,\,\,\,\,{\rm and }\,\,\,\,\,\, b:=\nabla_{\mathbb{G}} u_j(\xi),
$$
we get
\begin{equation}\label{GMB1}
\begin{aligned}
 & \frac{1}{2}\Bigg[\int_{D}|\nabla_{\mathbb{G}} u_j(\xi)|^2\,d\xi-\int_{D}|\nabla_{\mathbb{G}} u_\infty(\xi)|^2\,d\xi \Bigg]\\
& \qquad \qquad\qquad \geq \int_{D}\langle\nabla_\mathbb{G}u_\infty(\xi),\nabla_\mathbb{G}(u_j-u_\infty)(\xi)\rangle\,d\xi\\
& \qquad \qquad\qquad\qquad\qquad\qquad+ \frac{1}{4}\int_{D}|\nabla_{\mathbb{G}} (u_j-u_\infty)(\xi)|^2\,d\xi.
\end{aligned}
\end{equation}
On the other hand, by the well known Br\'{e}zis-Lieb Lemma (see \cite[Lemma 1.32]{Wi}), one has
\begin{equation}\label{GMB2}
\begin{aligned}
 & \liminf_{j\rightarrow +\infty}\Bigg[\displaystyle\int_D |u_j(\xi)|^{2^*}d\xi-\displaystyle\int_D |u_\infty(\xi)|^{2^*}d\xi\Bigg]\\
& \qquad \qquad\qquad = \liminf_{j\rightarrow +\infty}\displaystyle\int_D |u_j(\xi)-u_\infty(\xi)|^{2^*}d\xi.
\end{aligned}
\end{equation}
 Further, testing \eqref{convergenze000} with $\varphi=u_\infty$, it follows that
 \begin{equation}\label{convergenze0000}
\begin{aligned}
 & \displaystyle\int_{D} \langle\nabla_\mathbb{G}u_j(\xi),\nabla_\mathbb{G}u_\infty(\xi)\rangle\,d\xi \to \\
& \qquad \qquad \qquad
\displaystyle\int_{D} \langle\nabla_\mathbb{G}u_\infty(\xi),\nabla_\mathbb{G}u_\infty(\xi)\rangle\,d\xi,
\end{aligned}
\end{equation}
as $j\rightarrow +\infty$.\par
 \noindent Hence by using \eqref{GMB1} and \eqref{GMB2}, the above relation \eqref{convergenze0000} yields
 \smallskip
 \begin{equation}\label{perGMB22}
\begin{aligned}
 & \liminf_{j\rightarrow +\infty}(\mathcal{L}_{\mu}(u_j)-\mathcal{L}_{\mu}(u_\infty))\\
& \qquad \qquad\geq \liminf_{j\rightarrow +\infty}\Bigg\{\frac{1}{4}\int_{D}|\nabla_{\mathbb{G}} (u_j-u_\infty)(\xi)|^2\,d\xi\\
& \qquad \qquad\qquad\qquad\qquad\qquad-\frac{\mu}{2^*}\int_D |u_j(\xi)-u_\infty(\xi)|^{2^*}d\xi\Bigg\}.
\end{aligned}
\end{equation}
 Finally, bearing in mind that $S^1_0(D)\hookrightarrow L^{2^*}(D)$ continuously and owing to $\{u_j-u_\infty\}_{j\in \enne}\subset \overline{B_{S^1_0(D)}(0,2\varrho)}$, by \eqref{perGMB22} we easily have
\begin{equation}\label{GMB22}
\begin{aligned}
L &\geq \liminf_{j\rightarrow +\infty} \|u_j-u_\infty\|^{2}_{S^1_0(D)}\left(\frac{1}{4}-\frac{\mu\kappa_{2^*}^{2^*}}{2^*}\|u_j-u_\infty\|^{2^*-2}_{S^1_0(D)}\right)\\
& \geq \liminf_{j\rightarrow +\infty} \|u_j-u_\infty\|^{2}_{S^1_0(D)}\left(\frac{1}{4}-\frac{\mu\kappa_{2^*}^{2^*}2^{2^*-2}}{2^*}\varrho^{2^*-2}\right).
\end{aligned}
\end{equation}
Hence, for $\varrho$ sufficiently small, that is
$$
0<\varrho\leq \bar{\varrho}_\mu:=\frac{1}{2}\left(\frac{2^*}{4\mu \kappa_{2^*}^{2^*}}\right)^{\frac{1}{2^*-2}},
$$
by \eqref{GMB22} inequality \eqref{convergenzeFine} is verified.\par
 \noindent Therefore, the functional $\mathcal{L}_{\mu}$ is sequentially weakly lower semicontinuous on $\overline{B_{S^1_0(D)}(0,\varrho_{0,\mu})}$, provided that $\varrho_{0,\mu}\in (0,\bar{\varrho}_{\mu})$.
\end{proof}
\begin{remark}\rm{
Note that in \cite{Faraci, Squa}, studying elliptic problems
involving $p$-Laplacian operators on a bounded Euclidean smooth domain $\Omega$, the authors proved analogous weakly
lower semicontinuity results for the restriction of suitable energy functionals on small balls of $W^{1,p}_0(\Omega)$.}
\end{remark}
Fix $\mu,\lambda>0$ and denote
$$
\Phi(u):=\|u\|_{S^1_0(D)}
\quad\mbox{and}\quad\Psi_{\mu,\lambda}(u):=\frac{\mu}{2^*}\displaystyle\int_D |u(\xi)|^{2^*}d\xi+\lambda\int_{D}G(u(\xi))d\xi,
$$
for every $u\in S^1_0(D)$. Here, as usual, we set
\begin{equation}\label{G}
\displaystyle G(t):=\int_0^{t}g(\tau)d\tau,\quad\forall \,t\in\erre.
\end{equation}

The following preparatory results involving the functionals $\Phi$ and $\Psi_{\mu,\lambda}$ can be easily proved.

\begin{lemma}\label{lemmino1}
Let $\mu,\lambda>0$ and suppose that
\begin{equation*}\label{Ga199}
\limsup_{\varepsilon\rightarrow 0^+}\frac{\displaystyle \sup_{v\in\Phi^{-1}([0,\varrho_0])}\Psi_{\mu,\lambda}(v)-\sup_{v\in \Phi^{-1}([0,\varrho_0-\varepsilon])}\Psi_{\mu,\lambda}(v)}{\varepsilon}<\varrho_0,
\end{equation*}
for some $\varrho_0>0$. Then
\begin{equation}\label{Ga2}
\inf_{\sigma<\varrho_0}\frac{\displaystyle \sup_{v\in\Phi^{-1}([0,\varrho_0])}\Psi_{\mu,\lambda}(v)-\sup_{v\in  \Phi^{-1}([0,\sigma])}\Psi_{\mu,\lambda}(v)}{\varrho_0^2-\sigma^2}<\frac{1}{2}.
\end{equation}
\end{lemma}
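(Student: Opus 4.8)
The plan is to reduce \eqref{Ga2} to an elementary computation in one real variable. Set, for $r\ge 0$,
\[
\varphi(r):=\sup_{v\in\Phi^{-1}([0,r])}\Psi_{\mu,\lambda}(v),
\]
which is nondecreasing in $r$, since the ball $\Phi^{-1}([0,r])$ grows with $r$. With this notation the hypothesis becomes $\limsup_{\varepsilon\to 0^+}\big(\varphi(\varrho_0)-\varphi(\varrho_0-\varepsilon)\big)/\varepsilon<\varrho_0$ and the conclusion becomes $\inf_{\sigma<\varrho_0}\big(\varphi(\varrho_0)-\varphi(\sigma)\big)/(\varrho_0^2-\sigma^2)<1/2$. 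Because $\varphi$ is monotone, all the difference quotients involved are nonnegative, so in particular the limsup in the hypothesis is a well-defined real number $\ell$ with $0\le\ell<\varrho_0$.

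First I would invoke the definition of $\limsup$ to choose a constant $c$ with $\ell<c<\varrho_0$ and a threshold $\delta>0$ such that
\[
\varphi(\varrho_0)-\varphi(\varrho_0-\varepsilon)<c\,\varepsilon\qquad\text{for all }\varepsilon\in(0,\delta).
\]
Then, for such an $\varepsilon$ (shrinking $\delta$ if necessary so that also $\varepsilon<\varrho_0$), I would evaluate the infimum in \eqref{Ga2} at the admissible value $\sigma:=\varrho_0-\varepsilon\in[0,\varrho_0)$. Using the identity $\varrho_0^2-\sigma^2=(\varrho_0-\sigma)(\varrho_0+\sigma)=\varepsilon(2\varrho_0-\varepsilon)>0$, this gives
\[
\frac{\varphi(\varrho_0)-\varphi(\sigma)}{\varrho_0^2-\sigma^2}<\frac{c\,\varepsilon}{\varepsilon(2\varrho_0-\varepsilon)}=\frac{c}{2\varrho_0-\varepsilon}.
\]

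To conclude, I would let $\varepsilon\to 0^+$: since $c<\varrho_0$, the right-hand side converges to $c/(2\varrho_0)<1/2$, so any $\varepsilon$ with $\varepsilon<2(\varrho_0-c)$ and $\varepsilon<\min\{\delta,\varrho_0\}$ already yields $c/(2\varrho_0-\varepsilon)<1/2$; the corresponding $\sigma=\varrho_0-\varepsilon$ then witnesses $\inf_{\sigma<\varrho_0}(\cdots)<1/2$, which is exactly \eqref{Ga2}. I do not expect any real obstacle here: the whole argument is the algebraic identity for $\varrho_0^2-\sigma^2$ combined with the definition of $\limsup$. The only points deserving a line of care are that $\varphi$ is nondecreasing (so that the quotients are nonnegative and $\ell$ is finite) and that the chosen comparison value $\sigma=\varrho_0-\varepsilon$ stays in the admissible range $[0,\varrho_0)$.
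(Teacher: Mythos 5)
Your argument is correct: the paper itself gives no proof of this lemma (it only remarks that it ``can be easily proved'' and defers to \cite{FMR}), and your reduction --- monotonicity of $\varrho\mapsto\sup_{\Phi^{-1}([0,\varrho])}\Psi_{\mu,\lambda}$, the $\limsup$ definition to get $\varphi(\varrho_0)-\varphi(\varrho_0-\varepsilon)<c\,\varepsilon$ with $c<\varrho_0$, and the factorization $\varrho_0^2-\sigma^2=\varepsilon(2\varrho_0-\varepsilon)$ at $\sigma=\varrho_0-\varepsilon$ --- is exactly the intended elementary computation. No gaps; your care about $\sigma$ staying in $[0,\varrho_0)$ and the quotients being nonnegative covers the only delicate points.
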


Moreover, since the functional $\Psi_{\mu,\lambda}$ is sequentially weakly lower semicontinuous on $\Phi^{-1}([0,\varrho_0])$, the next proposition holds.

\begin{lemma}\label{lemmino2}
Let $\mu,\lambda>0$ and suppose that condition \eqref{Ga2} holds for some $\varrho_0>0$. Then
\begin{equation*}\label{VGa1}
\inf_{u\in\Phi^{-1}([0,\varrho_0))}
\frac{\displaystyle\sup_{v\in\Phi^{-1}([0,\varrho_0])}\Psi_{\mu,\lambda}(v)-\Psi_{\mu,\lambda}(u)}{\varrho_0^2- \|u\|_{S^1_0(D)}^2}<\frac{1}{2}.
\end{equation*}
\end{lemma}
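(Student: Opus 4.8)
The plan is to derive Lemma \ref{lemmino2} from Lemma \ref{lemmino1} (specifically, from the conclusion \eqref{Ga2}) by exploiting the sequential weak lower semicontinuity of $\Psi_{\mu,\lambda}$ on the weakly compact set $\Phi^{-1}([0,\varrho_0])$. Write $S:=\sup_{v\in\Phi^{-1}([0,\varrho_0])}\Psi_{\mu,\lambda}(v)$, which is finite (by the continuous Sobolev embedding for the $|u|^{2^*}$ term and the growth control on $G$ coming from the subcritical bound on $g$). The quantity we must estimate is
\[
\mathcal{I}:=\inf_{u\in\Phi^{-1}([0,\varrho_0))}\frac{S-\Psi_{\mu,\lambda}(u)}{\varrho_0^2-\|u\|_{S^1_0(D)}^2}.
\]

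First I would rewrite \eqref{Ga2} in a more usable form. For $\sigma<\varrho_0$ set
\[
S_\sigma:=\sup_{v\in\Phi^{-1}([0,\sigma])}\Psi_{\mu,\lambda}(v),
\]
so that \eqref{Ga2} reads $\inf_{\sigma<\varrho_0}(S-S_\sigma)/(\varrho_0^2-\sigma^2)<1/2$. Hence there exists $\sigma_0\in[0,\varrho_0)$ with
\[
\frac{S-S_{\sigma_0}}{\varrho_0^2-\sigma_0^2}<\frac12 .
\]
The key point is then to replace the supremum $S_{\sigma_0}$ over the ball $\Phi^{-1}([0,\sigma_0])$ by the value $\Psi_{\mu,\lambda}(u_0)$ at a single point $u_0$ of that ball. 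This is exactly where weak lower semicontinuity enters: the ball $\Phi^{-1}([0,\sigma_0])$ is bounded, convex and closed in the Hilbert space $S^1_0(D)$, hence sequentially weakly compact; since $\Psi_{\mu,\lambda}$ is sequentially weakly lower semicontinuous on it (as remarked just before the statement of Lemma \ref{lemmino2}), and since weak lower semicontinuity combined with weak compactness guarantees that the supremum is attained — more precisely, the functional $\Psi_{\mu,\lambda}$ attains its supremum on a weakly compact set because, taking a maximizing sequence $v_k$ converging weakly (along a subsequence) to some $u_0\in\Phi^{-1}([0,\sigma_0])$, one gets $\Psi_{\mu,\lambda}(u_0)\geq\liminf_k\Psi_{\mu,\lambda}(v_k)=\limsup_k\Psi_{\mu,\lambda}(v_k)=S_{\sigma_0}$, so $\Psi_{\mu,\lambda}(u_0)=S_{\sigma_0}$. (One should note that $\|u_0\|_{S^1_0(D)}\leq\sigma_0<\varrho_0$, so $u_0\in\Phi^{-1}([0,\varrho_0))$ is admissible in the infimum defining $\mathcal{I}$.)

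Then I would simply plug $u=u_0$ into $\mathcal{I}$: since $\Psi_{\mu,\lambda}(u_0)=S_{\sigma_0}$ and $\|u_0\|_{S^1_0(D)}^2\leq\sigma_0^2$, the denominator satisfies $\varrho_0^2-\|u_0\|_{S^1_0(D)}^2\geq\varrho_0^2-\sigma_0^2>0$, and the numerator satisfies $S-\Psi_{\mu,\lambda}(u_0)=S-S_{\sigma_0}\geq0$; therefore
\[
\mathcal{I}\leq\frac{S-\Psi_{\mu,\lambda}(u_0)}{\varrho_0^2-\|u_0\|_{S^1_0(D)}^2}\leq\frac{S-S_{\sigma_0}}{\varrho_0^2-\sigma_0^2}<\frac12 ,
\]
which is precisely the conclusion. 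The main obstacle — and the only genuinely nontrivial point — is justifying that the supremum $S_{\sigma_0}$ is attained at a point of $\Phi^{-1}([0,\sigma_0])$, i.e. turning the sequential weak lower semicontinuity of $\Psi_{\mu,\lambda}$ together with the sequential weak compactness of the closed ball of the reflexive (indeed Hilbert) space $S^1_0(D)$ into an existence of maximizer statement; the remaining manipulations are purely arithmetic monotonicity comparisons on the quotient. One minor care is needed when $\sigma_0=0$, in which case $S_0=\Psi_{\mu,\lambda}(0)=0$ (as $G(0)=0$) and the argument degenerates harmlessly with $u_0=0$.
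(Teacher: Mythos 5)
Your overall strategy --- extract $\sigma_0<\varrho_0$ from \eqref{Ga2} and compare the quotient defining $\mathcal{I}$ with $(S-S_{\sigma_0})/(\varrho_0^2-\sigma_0^2)$ --- is the right one, and it is essentially what lies behind the paper's one-line justification (the paper defers the details to \cite{FMR}). However, the step you single out as ``the only genuinely nontrivial point'' is wrong as written. Sequential weak lower semicontinuity on a weakly compact set yields attainment of the \emph{infimum}, not of the supremum: for a maximizing sequence $v_k\rightharpoonup u_0$ it gives $\Psi_{\mu,\lambda}(u_0)\le\liminf_k\Psi_{\mu,\lambda}(v_k)$, which is the reverse of the inequality $\Psi_{\mu,\lambda}(u_0)\ge\liminf_k\Psi_{\mu,\lambda}(v_k)$ you assert. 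Attainment of $S_{\sigma_0}$ would require weak \emph{upper} semicontinuity, and the critical term $\frac{\mu}{2^*}\int_D|u(\xi)|^{2^*}d\xi$ is not weakly upper semicontinuous on balls of $S^1_0(D)$ (a concentrating sequence $v_k\rightharpoonup 0$ can keep $\int_D|v_k|^{2^*}d\xi$ bounded away from $0$); the possible non-attainment of such suprema is precisely the difficulty created by the critical exponent. So the maximizer $u_0$ you plug into $\mathcal{I}$ need not exist.

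The gap is easily repaired, and the repair shows that attainment was never needed: a near-maximizer suffices. Choose $\sigma_0<\varrho_0$ with $(S-S_{\sigma_0})/(\varrho_0^2-\sigma_0^2)<\frac12$ and a sequence $\{v_k\}\subset\Phi^{-1}([0,\sigma_0])$ with $\Psi_{\mu,\lambda}(v_k)\to S_{\sigma_0}$. Each $v_k$ satisfies $\|v_k\|_{S^1_0(D)}\le\sigma_0<\varrho_0$, hence is admissible in $\mathcal{I}$, and
\begin{equation*}
\mathcal{I}\;\le\;\frac{S-\Psi_{\mu,\lambda}(v_k)}{\varrho_0^2-\|v_k\|^2_{S^1_0(D)}}\;\le\;\frac{S-\Psi_{\mu,\lambda}(v_k)}{\varrho_0^2-\sigma_0^2},
\end{equation*}
where the second inequality holds because $S-\Psi_{\mu,\lambda}(v_k)\ge 0$ and $\varrho_0^2-\|v_k\|^2_{S^1_0(D)}\ge\varrho_0^2-\sigma_0^2>0$. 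Letting $k\to\infty$, the right-hand side tends to $(S-S_{\sigma_0})/(\varrho_0^2-\sigma_0^2)<\frac12$, so for $k$ large the conclusion follows. With this substitution your remaining arithmetic is correct; note also that the implication \eqref{Ga2} $\Rightarrow$ the displayed inequality is purely set-theoretic and does not actually use the weak lower semicontinuity of $\Psi_{\mu,\lambda}$, which instead serves the minimization of $\mathcal{J}_{\mu,\lambda}$ later in the proof of Theorem \ref{FerraraMolicaBisci2}.
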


See \cite{FMR} for more details.

\section{The Main Result}\label{Quattro}
The aim of this section is to prove that, for every $\mu>0$ and $\lambda$ sufficiently small, requiring that the perturbation term $g$ has a subcritical growth,
weak solutions to problem $(P_{\mu,\lambda}^{g})$ below do exist.\par

We recall that a \textit{weak solution} for the problem $(P_{\mu,\lambda}^{g})$, is a function $u:D\to \RR$ such that
$$
\mbox{
$\left\{\begin{array}{lll}
$$\displaystyle\int_{D} \langle\nabla_\mathbb{G}u(\xi),\nabla_\mathbb{G}\varphi(\xi)\rangle\,d\xi-\displaystyle\mu\displaystyle\int_D |u(\xi)|^{2^*-2}u(\xi)\varphi(\xi)d\xi\\
\qquad \qquad \qquad\qquad\quad\quad = \displaystyle\lambda\displaystyle\int_D g(u(\xi))\varphi(\xi)d\xi, \,\,\,\,\,\,\,\forall\,\varphi \in S^1_0(D)$$\\
$$u\in S^1_0(D)$$.
\end{array}
\right.$}
$$
\indent Our main result reads as follows.
\begin{theorem}\label{FerraraMolicaBisci2}
Let $D$ be a smooth and bounded domain of a Carnot group $\mathbb{G}$ with ${\rm dim}_h{\mathbb{G}}\geq 3$, and
$g:\erre\rightarrow\erre$ a continuous function such that
\begin{itemize}
\item[$(g_\infty)$] there exist $a_1, a_2>0$ and $p\in [1, 2^*)$ such that
$$
|g(t)|\leq a_1+a_2|t|^{p-1},
$$
for every $t\in \erre$.
\end{itemize}
 Then for every $\mu> 0$ there exists an open interval $\Lambda_\mu\subset (0,+\infty)$ such that for every $\lambda\in \Lambda_\mu$,
the following subelliptic critical problem
$$(P_{\mu,\lambda}^{g})\,\,\,\,\,\,\,
\left\{
\begin{array}{ll}
-\Delta_{\mathbb{G}} u=\displaystyle \mu|u|^{2^*-2}u+\lambda g(u) &  \mbox{\rm in } D\\
u|_{\partial D}=0, &
\end{array}\right.
$$
admits at least one weak solution in the Folland-Stein space $S^1_0(D)$.
\end{theorem}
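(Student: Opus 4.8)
The plan is to deduce Theorem~\ref{FerraraMolicaBisci2} by the direct method of the calculus of variations, obtaining the weak solution as a \emph{local} minimizer of the energy functional
\[
\mathcal{J}_{\mu,\lambda}(u)=\frac12\|u\|_{S^1_0(D)}^2-\Psi_{\mu,\lambda}(u)=\mathcal{L}_{\mu}(u)-\lambda\int_{D}G(u(\xi))\,d\xi ,\qquad u\in S^1_0(D),
\]
with $G$ as in \eqref{G}. First I would verify that $\mathcal{J}_{\mu,\lambda}\in C^1(S^1_0(D))$, with
\[
\langle\mathcal{J}_{\mu,\lambda}'(u),\varphi\rangle=\int_{D}\langle\nabla_{\mathbb{G}}u,\nabla_{\mathbb{G}}\varphi\rangle\,d\xi-\mu\int_{D}|u|^{2^*-2}u\,\varphi\,d\xi-\lambda\int_{D}g(u)\,\varphi\,d\xi ,
\]
which follows from the continuous embedding $S^1_0(D)\hookrightarrow L^{2^*}(D)$, the growth hypothesis $(g_\infty)$, and standard Nemytskii-operator arguments; in particular, every critical point of $\mathcal{J}_{\mu,\lambda}$ is a weak solution of $(P_{\mu,\lambda}^{g})$. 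Fix now $\mu>0$. Inspecting the proof of Proposition~\ref{ls}, $\mathcal{L}_{\mu}$ is sequentially weakly lower semicontinuous on $\overline{B_{S^1_0(D)}(0,\varrho)}$ for every $\varrho\in(0,\bar{\varrho}_\mu)$, where $\bar{\varrho}_\mu:=\frac12\bigl(\frac{2^*}{4\mu\kappa_{2^*}^{2^*}}\bigr)^{1/(2^*-2)}$; set
\[
h_\mu(\varrho):=\frac{\varrho-\mu\kappa_{2^*}^{2^*}\varrho^{2^*-1}}{a_1\kappa_{2^*}|D|^{(2^*-1)/2^*}+a_2\kappa_{2^*}^{p}|D|^{(2^*-p)/2^*}\varrho^{p-1}},\qquad 0<\varrho<\bar{\varrho}_\mu .
\]
Since $\bar{\varrho}_\mu<(\mu\kappa_{2^*}^{2^*})^{-1/(2^*-2)}$, the numerator of $h_\mu$ is positive on $(0,\bar{\varrho}_\mu)$, so $\lambda^*_\mu:=\sup_{0<\varrho<\bar{\varrho}_\mu}h_\mu(\varrho)>0$, and one takes $\Lambda_\mu:=(0,\lambda^*_\mu)$; for a given $\lambda\in\Lambda_\mu$ choose $\varrho_0\in(0,\bar{\varrho}_\mu)$ with $h_\mu(\varrho_0)>\lambda$ and work on the ball $K:=\Phi^{-1}([0,\varrho_0])=\overline{B_{S^1_0(D)}(0,\varrho_0)}$.

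On $K$, the functional $\mathcal{L}_{\mu}$ is sequentially weakly lower semicontinuous by Proposition~\ref{ls}, whereas $u\mapsto\int_{D}G(u(\xi))\,d\xi$ is sequentially weakly \emph{continuous}: if $u_j\rightharpoonup u_\infty$ in $S^1_0(D)$, then by the compactness of $S^1_0(D)\hookrightarrow L^{q}(D)$ for $1\le q<2^*$ one has $u_j\to u_\infty$ in $L^{1}(D)\cap L^{p}(D)$, and the bound $|G(t)|\le a_1|t|+\frac{a_2}{p}|t|^{p}$ together with dominated convergence along subsequences gives $\int_{D}G(u_j)\to\int_{D}G(u_\infty)$. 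Hence $\mathcal{J}_{\mu,\lambda}$ is sequentially weakly lower semicontinuous on $K$, and since $K$ is bounded, closed and convex it is sequentially weakly compact in the Hilbert space $S^1_0(D)$; by the generalized Weierstrass theorem there is $u_0\in K$ with $\mathcal{J}_{\mu,\lambda}(u_0)=\min_{K}\mathcal{J}_{\mu,\lambda}$.

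The crux will be to show that $u_0$ is an \emph{interior} point of $K$. Writing $\psi(t):=\sup_{v\in\Phi^{-1}([0,t])}\Psi_{\mu,\lambda}(v)$, for $\varepsilon\in(0,\varrho_0)$ one projects radially each competitor $v$ with $\varrho_0-\varepsilon<\|v\|\le\varrho_0$ onto the sphere of radius $\varrho_0-\varepsilon$ and uses the mean value inequality to get $\psi(\varrho_0)-\psi(\varrho_0-\varepsilon)\le\varepsilon\sup_{\|u\|\le\varrho_0}\|\Psi_{\mu,\lambda}'(u)\|_{(S^1_0(D))^{*}}$; Hölder's inequality, the Folland--Stein inequality \eqref{folland} and $(g_\infty)$ bound this supremum by $\mu\kappa_{2^*}^{2^*}\varrho_0^{2^*-1}+\lambda\bigl(a_1\kappa_{2^*}|D|^{(2^*-1)/2^*}+a_2\kappa_{2^*}^{p}|D|^{(2^*-p)/2^*}\varrho_0^{p-1}\bigr)$, which is strictly less than $\varrho_0$ precisely because $\lambda<h_\mu(\varrho_0)$. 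Thus $\limsup_{\varepsilon\to0^{+}}\bigl(\psi(\varrho_0)-\psi(\varrho_0-\varepsilon)\bigr)/\varepsilon<\varrho_0$, so Lemma~\ref{lemmino1} yields \eqref{Ga2}, and then Lemma~\ref{lemmino2} provides $\bar u\in\Phi^{-1}([0,\varrho_0))$ with
\[
\mathcal{J}_{\mu,\lambda}(\bar u)=\frac12\|\bar u\|_{S^1_0(D)}^{2}-\Psi_{\mu,\lambda}(\bar u)<\frac12\varrho_0^{2}-\psi(\varrho_0).
\]
If $\|u_0\|_{S^1_0(D)}$ were equal to $\varrho_0$, then $\mathcal{J}_{\mu,\lambda}(u_0)=\frac12\varrho_0^{2}-\Psi_{\mu,\lambda}(u_0)\ge\frac12\varrho_0^{2}-\psi(\varrho_0)>\mathcal{J}_{\mu,\lambda}(\bar u)\ge\min_{K}\mathcal{J}_{\mu,\lambda}$, contradicting the minimality of $u_0$; hence $\|u_0\|_{S^1_0(D)}<\varrho_0$. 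Consequently $u_0$ is a local minimizer of $\mathcal{J}_{\mu,\lambda}$ on the whole of $S^1_0(D)$, so $\mathcal{J}_{\mu,\lambda}'(u_0)=0$ and $u_0$ is the required weak solution (and it is automatically nontrivial when $g(0)\neq0$, since $u\equiv0$ solves $(P_{\mu,\lambda}^{g})$ only if $g(0)=0$).

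The main obstacle is precisely this last step: converting the smallness of $\lambda$ — quantified through the rational function $h_\mu$, hence through the a priori bound on $\|\Psi_{\mu,\lambda}'\|$ over the ball — into the strict inequality that forces the minimizer off the boundary sphere. By contrast, the critical growth of the nonlinearity creates no difficulty here, being neutralized once and for all by the local sequential weak lower semicontinuity of $\mathcal{L}_{\mu}$ established in Proposition~\ref{ls}; this is exactly what lets one dispense with any concentration--compactness analysis.
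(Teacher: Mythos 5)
Your argument is correct and follows essentially the same route as the paper: you localize to a ball of radius $\varrho_0<\bar{\varrho}_\mu$ where Proposition~\ref{ls} gives weak lower semicontinuity of $\mathcal{L}_\mu$ (and the compact subcritical embeddings give weak continuity of the $G$-term), minimize $\mathcal{J}_{\mu,\lambda}$ on that ball, and use the condition $\lambda<h_\mu(\varrho_0)$ together with Lemmas~\ref{lemmino1} and~\ref{lemmino2} to force the minimizer into the interior. The only (harmless) deviations are that you define $\Lambda_\mu$ via $\sup_{0<\varrho<\bar{\varrho}_\mu}h_\mu(\varrho)$ instead of $h_\mu(\min\{\varrho_{\mu,\max},\bar{\varrho}_\mu\})$, and you obtain the key estimate $\limsup_{\varepsilon\to 0^+}\bigl(\psi(\varrho_0)-\psi(\varrho_0-\varepsilon)\bigr)/\varepsilon<\varrho_0$ from a mean-value bound on $\|\Psi_{\mu,\lambda}'\|$ rather than the paper's direct integral estimate along the radial scaling --- the resulting bound $p_{\mu,\lambda}(\varrho_0)<\varrho_0$ is identical.
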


\begin{proof}
Let us fix $\mu>0$ and let $\varrho_{\mu, \max}>0$ be the global maximum of the rational function defined by
\begin{equation}\label{hfunc}
h_\mu(\varrho):=
\frac{{\varrho}-\mu \kappa_{2^*}^{2^*}\varrho^{2^*-1}}{a_1\kappa_{2^*}|D|^{\frac{2^*-1}{2^*}}+a_2\kappa_{2^*}^{p}|D|^{\frac{2^*-p}{2^*}}\varrho^{p-1}},\quad\forall\varrho\geq 0.
\end{equation}
\indent Set $\varrho_{0,\mu}:=\min\{\varrho_{\mu, \max}, \bar{\varrho}_\mu\}$ and take
$$
\lambda\in\Lambda_\mu:=\left(0,h_\mu(\varrho_{0,\mu})\right).
$$
Hence there exists $\varrho_{0,\mu,\lambda}\in (0,\varrho_{0,\mu})$ such that
\begin{equation}\label{range}
\lambda<\frac{{\varrho_{0,\mu,\lambda}}-\mu \kappa_{2^*}^{2^*}\varrho_{0,\mu,\lambda}^{2^*-1}}{a_1\kappa_{2^*}|D|^{\frac{2^*-1}{2^*_h}}+a_2\kappa_{2^*}^{p}|D|^{\frac{2^*-p}{2^*}}\varrho_{0,\mu,\lambda}^{p-1}}.
\end{equation}
\indent Now, let us consider the functional $\mathcal{J}_{\mu, \lambda}:S^1_0(D)\to \RR$ defined by
$$
\mathcal{J}_{\mu, \lambda}(u):=\frac{1}{2}\|u\|_{S^1_0(D)}^2-\frac{\mu}{2^*}\displaystyle\int_D |u(\xi)|^{2^*}d\xi
$$
$$
\qquad\qquad\qquad\qquad\qquad\qquad-\lambda\displaystyle\int_D G(u(\xi))d\xi,\quad \forall\, u\in S^1_0(D)\,
$$
where $G$ is given in \eqref{G}.\par
Note that, since $g$ has a subcritical growth, the functional $\mathcal{J}_{\mu,\lambda}\in C^1(S^1_0(D))$ and its derivative at $u\in S^1_0(D)$ is given by
$$
\langle \mathcal{J}'_{\mu,\lambda}(u), \varphi\rangle = \displaystyle\int_{D} \langle\nabla_\mathbb{G}u(\xi),\nabla_\mathbb{G}\varphi(\xi)\rangle\,d\xi-\displaystyle\mu\displaystyle\int_D |u(\xi)|^{2^*-2}u(\xi)\varphi(\xi)d\xi
$$
$$
\qquad\qquad-\lambda\displaystyle\int_D g(u(\xi))\varphi(\xi)d\xi,
$$
for every $\varphi \in S^1_0(D)$. Thus the weak solutions of problem $(P_{\mu,\lambda}^{g})$ are exactly the critical points of the energy functional $\mathcal{J}_{\mu,\lambda}$. Now, let $0<\varepsilon<\varrho_{0,\mu,\lambda}$ and set
  \begin{equation*}\label{Ga1}
\Lambda_{\lambda,\mu}(\varepsilon,\varrho_{0,\mu,\lambda}):=\frac{\displaystyle \sup_{v\in\Phi^{-1}([0,\varrho_{0,\mu,\lambda}])}\Psi_{\mu,\lambda}(v)-\sup_{v\in \Phi^{-1}([0,\varrho_{0,\mu,\lambda}-\varepsilon])}\Psi_{\mu,\lambda}(v)}{\varepsilon}.
\end{equation*}
\indent Let us prove that
\begin{equation}\label{limsup}
\limsup_{\varepsilon\rightarrow 0^+}\Lambda_{\lambda,\mu}(\varepsilon,\varrho_{0,\mu,\lambda})<\varrho_{0,\mu,\lambda}.
\end{equation}
\noindent First, note that
 $$
 \Lambda_{\lambda,\mu}(\varepsilon,\varrho_{0,\mu,\lambda})\leq\frac{1}{\varepsilon}\left|\displaystyle \sup_{v\in\Phi^{-1}([0,\varrho_{0,\mu,\lambda}])}\Psi_{\mu,\lambda}(v)-\sup_{v\in \Phi^{-1}([0,\varrho_{0,\mu,\lambda}-\varepsilon])}\Psi_{\mu,\lambda}(v)\right|.
 $$
 Moreover, it follows that
 $$
 \Lambda_{\lambda,\mu}(\varepsilon,\varrho_{0,\mu,\lambda})\leq\sup_{v\in \Phi^{-1}([0,1])}\int_D\left|\int_{(\varrho_{0,\mu,\lambda}-\varepsilon)v(\xi)}^{\varrho_{0,\mu,\lambda} v(\xi)}\frac{|f_{\mu,\lambda}(t)|}{\varepsilon}dt\right|d\xi,
 $$
 where
 $$
 f_{\mu,\lambda}(t):=\mu|t|^{2^*-2}t+\lambda g(t),\quad\forall\,t\in\erre.
 $$
 On the other hand, the growth condition $(g_\infty)$ yields

$$
\sup_{v\in \Phi^{-1}([0,1])}\int_D\left|\int_{(\varrho_{0,\mu,\lambda}-\varepsilon)v(\xi)}^{\varrho_{0,\mu,\lambda} v(\xi)}\frac{|f_{\mu,\lambda}(t)|}{\varepsilon}dt\right|d\xi \leq \frac{\mu\kappa_{2^*}^{2^*}}{2}\left(\frac{\varrho_{0,\mu,\lambda}^{2^*}-(\varrho_{0,\mu,\lambda}-\varepsilon)^{2^*}}{\varepsilon}\right)
$$
$$
\qquad+\lambda\left(a_1\kappa_{2^*}|D|^{\frac{2^*-1}{2^*}}+a_2\frac{\kappa_{2^*}^{p}}{p}\left(\frac{\varrho_{0,\mu,\lambda}^{p}-(\varrho_{0,\mu,\lambda}-\varepsilon)^{p}}{\varepsilon}\right)|D|^{\frac{2^*-p}{2^*}}\right).
$$
 \noindent Thanks to the above relations, passing to the limsup, as $\varepsilon\rightarrow 0^+$, we get
 $$
\limsup_{\varepsilon\rightarrow 0^+}\Lambda_{\lambda,\mu}(\varepsilon,\varrho_{0,\mu,\lambda})<p_{\mu,\lambda}(\varrho_{0,\mu,\lambda}),
$$
where
$$
p_{\mu,\lambda}(\varrho_{0,\mu,\lambda}):=
\mu\kappa_{2^*}^{2^*}\varrho_{0,\mu,\lambda}^{2^*-1}+\lambda\left(a_1\kappa_{2^*}|D|^{\frac{2^*-1}{2^*}}+a_2\kappa_{2^*}^{p}|D|^{\frac{2^*-p}{2^*}}\varrho_{0,\mu,\lambda}^{p-1}\right).
$$
\noindent Bearing in mind \eqref{range}, it follows that
$$p_{\mu,\lambda}(\varrho_{0,\mu,\lambda})<\varrho_{0,\mu,\lambda}$$
and inequality \eqref{limsup} is verified.\par
\indent Owing to \eqref{limsup}, by Lemmas \ref{lemmino1} and \ref{lemmino2}, one has
 \begin{equation*}\label{VGa1Glu}
\inf_{u\in\Phi^{-1}([0,\varrho_{0,\mu,\lambda}))}
\frac{\displaystyle\sup_{v\in\Phi^{-1}([0,\varrho_{0,\mu,\lambda}])}\Psi_{\mu,\lambda}(v)-\Psi_{\mu,\lambda}(u)}{\varrho_{0,\mu,\lambda}^2- \|u\|_{S^1_0(D)}^2}<\frac{1}{2}.
\end{equation*}
The above relation implies that there exists $w_{\mu,\lambda}\in S^1_0(D)$ such that
$$
\Psi_{\mu,\lambda}(u)\leq \displaystyle\sup_{v\in\Phi^{-1}([0,\varrho_{0,\mu,\lambda}])}\Psi_{\mu,\lambda}(v)<\Psi_{\mu,\lambda}(w_{\mu,\lambda})+\frac{1}{2}(\varrho_{0,\mu,\lambda}^2-\|w_{\mu,\lambda}\|^2_{S^1_0(D)}),
$$
for every $u\in\Phi^{-1}([0,\varrho_{0,\mu,\lambda}])$. Thus
 \begin{equation}\label{John}
 \mathcal{J}_{\mu,\lambda}(w_{\mu,\lambda}):=\frac{1}{2}\|w_{\mu,\lambda}\|_{S^1_0(D)}^2-\Psi_{\mu,\lambda}(w_{\mu,\lambda})<\frac{\varrho_{0,\mu,\lambda}^2}{2}-\Psi_\lambda(u),
 \end{equation}
 for every $u\in\Phi^{-1}([0,\varrho_{0,\mu,\lambda}])$.\par
 \indent Since $\varrho_{0,\mu,\lambda}<\bar{\varrho}_\mu$, by Lemma \ref{ls} the energy functional $\mathcal{J}_{\mu,\lambda}$ is sequentially weakly lower semicontinuous on $\Phi^{-1}([0,\varrho_{0,\mu,\lambda}])$, the restriction $\mathcal{J}_{\mu,\lambda}|_{\Phi^{-1}([0,\varrho_{0,\mu,\lambda}])}$ has a global
 minimum $u_{0,\mu,\lambda}\in \Phi^{-1}([0,\varrho_{0,\mu,\lambda}])$.\par
  \indent Note that $u_{0,\mu,\lambda}$ belongs to $\Phi^{-1}([0,\varrho_{0,\mu,\lambda}))$. Indeed, if
 $\|u_{0,\mu,\lambda}\|_{S^1_0(D)}=\varrho_{0,\mu,\lambda}$, by \eqref{John}, one has
  \begin{equation*}
 \mathcal{J}_{\mu,\lambda}(u_{0,\mu,\lambda})=\frac{\varrho^2_{0,\mu,\lambda}}{2}-\Psi_{\mu,\lambda}(u_{0,\mu,\lambda})>\mathcal{J}_{\mu,\lambda}(w_{\mu,\lambda}),
 \end{equation*}
 \noindent which is a contradiction. In conclusion, it follows that $u_{0,\mu,\lambda}\in S^1_0(D)$ is a local minimum for the energy functional $\mathcal{J}_{\mu,\lambda}$ with $$\|u_{0,\mu,\lambda}\|_{S^1_0(D)}<\varrho_{0,\mu,\lambda},$$ hence in particular, a weak solution of problem $(P_{\mu,\lambda}^{g})$.
 The proof is now complete.
\end{proof}
\begin{remark}\label{intervalloo}\rm{
The interval $\Lambda_\mu$ in Theorem \ref{FerraraMolicaBisci2} can be explicitly localized. More precisely, one has
$$
\Lambda_\mu\subseteq \left(0,\max_{\varrho\geq 0}h_\mu(\varrho)\right),
$$
where
$$
h_\mu(\varrho):=
\frac{{\varrho}-\mu \kappa_{2^*}^{2^*}\varrho^{2^*-1}}{a_1\kappa_{2^*}|D|^{\frac{2^*-1}{2^*}}+a_2\kappa_{2^*}^{p}|D|^{\frac{2^*-p}{2^*}}\varrho^{p-1}},\quad(\forall\, \varrho\geq 0)
$$
and $|D|$ denotes the Lebesgue measure of the smooth and bounded domain $D\subset \mathbb{G}$. Notice that in the special case of the Heisenberg
group $\mathbb{H}^n$, an explicit expression of the Sobolev constant $\kappa_{2^*}$ was determined by Jerison and Lee in \cite[Corollary C]{JLe}.}
\end{remark}
\begin{remark}\label{FaraciFarkas2}\rm{
We observe that Theorem \ref{FerraraMolicaBisci2} remains valid also for subelliptic problems involving a more general subcritical lower order term, such as
$$(\widetilde{P}_{\mu,\lambda}^{g})\,\,\,\,\,\,\,
\left\{
\begin{array}{ll}
-\Delta_{\mathbb{G}} u=\displaystyle \mu|u|^{2^*-2}u+\lambda g(\xi,u) &  \mbox{\rm in } D\\
u|_{\partial D}=0, &
\end{array}\right.
$$
where $g:D\times\erre\rightarrow \erre$ is a non-zero Carath\'{e}odory function such that
$$
|g(\xi,t)|\leq a_1+a_2|t|^{r-1}+a_3|t|^{q-1},\quad \forall\, t\in\erre\,\, \mbox{and\, a.e.}\,\,\xi\in D,
$$
for some positive constants $a_j$ (with $j=1,2,3$) and $1<r<2\leq q<2^*$. In this case, for every $\mu> 0$, there exists an open interval $\Sigma_\mu\subset (0,+\infty)$ such that, for every $\lambda\in \Sigma_\mu$, problem $(\widetilde{P}_{\mu,\lambda}^{g})$ admits at least one weak solution in the Folland-Stein space $S^1_0(D)$. Furthermore the interval $\Sigma_\mu$ can be explicitly localized and one has
$$
\Sigma_\mu\subseteq \left(0,\max_{\varrho\geq 0}H_\mu(\varrho)\right),
$$
where
$$
H_\mu(\varrho):=
\frac{{\varrho}-\mu \kappa_{2^*}^{2^*}\varrho^{2^*-1}}{a_1\kappa_{2^*}|D|^{\frac{2^*-1}{2^*}}+a_2\kappa_{2^*}^{r}|D|^{\frac{2^*-r}{2^*}}\varrho^{r-1}+a_3\kappa_{2^*}^{q}|D|^{\frac{2^*-q}{2^*}}\varrho^{q-1}},
$$
for every non-negative $\varrho$.}
\end{remark}
\begin{remark}\label{FaraciFarkas}\rm{
 We observe that our variational methods are close to the ones adopted in \cite{Faraci} where the authors require suitable algebraic inequalities, involving the growth of the perturbation term $g$, and obtain some existence results for suitable classes of critical problems in bounded domains of the Euclidean space.
However, due to the presence of the real parameter $\lambda$, our technical approaches, as well as the statements of Theorems \ref{FerraraMolicaBisci2} and \ref{FerraraMolicaBisci22}, are different with respect to \cite{Faraci}.
For the sake of completeness, see also the papers \cite{AC,FMR} where a similar variational method has been used for studying subcritical elliptic equations.}
\end{remark}
\begin{remark}\label{Notzero}\rm{
Notice that if the identically zero function is not a local minimum for the energy functional $\mathcal{J}_{\mu, \lambda}$ then any weak solution of problem $(P_{\mu,\lambda}^{g})$ obtained by using Theorem \ref{FerraraMolicaBisci2} is non-trivial. This fact clearly happens, for instance, if $g(0)\neq 0$, as in Example \ref{esempiuccio} below.}
\end{remark}

\indent A direct application of our main result reads as follows.

\begin{example}\label{esempiuccio}\rm{
Let $D$ be a smooth and bounded domain of a Carnot group $\mathbb{G}$ with ${\rm dim}_h{\mathbb{G}}\geq 3$. By virtue of Theorem \ref{FerraraMolicaBisci2}, for every $\mu>0$ there exists an open interval $\Lambda_\mu\subset (0,+\infty)$ such that for every $\lambda\in \Lambda_\mu$,
the following critical problem
$$
\left\{
\begin{array}{ll}
-\Delta_{\mathbb{G}} u=\mu|u|^{2^*-2}u+\lambda\displaystyle (1+|u|^{p-1}) &  \mbox{\rm in } D\\
u|_{\partial D}=0, &
\end{array}\right.
$$
where $p\in [1,2^*)$,
admits at least one non-trivial weak solution in $S^1_0(D)$.
}
\end{example}
In the next example, bearing in mind Remark \ref{FaraciFarkas2}, the existence of one non-trivial weak solution is achieved in the case $g(0)=0$.
\begin{example}\label{esempiuccio2}\rm{
Let $D$ be a smooth and bounded domain of the Heisenberg group $\mathbb{H}^n$ with unitary Lebesgue measure. Furthermore, let $\alpha,\beta\in L^{\infty}_+(D)$ and take
$$1<r<2<q<2^*_h.$$ By virtue of Theorem \ref{FerraraMolicaBisci2} and Remark \ref{FaraciFarkas2}, for every $\mu>0$ there exists an open interval $\Sigma_\mu\subset (0,+\infty)$ such that for every $\lambda\in \Sigma_\mu$,
the following critical problem
$$
\left\{
\begin{array}{ll}
-\Delta_{\mathbb{H}^n} u=\mu|u|^{2^*_h-2}u+\lambda\displaystyle (\alpha(x)|u|^{r-2}u+\beta(x)|u|^{q-1}) &  \mbox{\rm in } D\\
u|_{\partial D}=0, &
\end{array}\right.
$$
has at least weak solution in $S^1_0(D)$.\par
 Notice that, in such a case, the identically zero function is not a local minimum for the energy functional $\mathcal{J}_{\mu, \lambda}$. Indeed, fix $u_0\in S^1_0(D)\setminus\{0\}$ and take $\zeta>0$. Then, as it is easy to see, there exists $\zeta_0>0$ such that
$$
\mathcal{J}_{\mu, \lambda}(\zeta u_0)<0,
$$
for every $\zeta\leq \zeta_0$.\par
 We conclude that the obtained weak solution is non-trivial and
$$
\Sigma_\mu\subseteq \left(0,\frac{1}{\min\{\kappa_{2^*_h},\kappa_{2^*_h}^{r}\|\alpha\|_{\infty},\kappa_{2^*_h}^{q}\|\beta\|_{\infty}\}}\max_{\varrho\in \Upsilon}\left(\frac{\varrho-\mu \kappa_{2^*_h}^{2^*_h}\varrho^{2^*_h-1}}{1+\varrho^{r-1}+\varrho^{q-1}}\right)\right),
$$
where
$$
\Upsilon:=\left(0,\frac{\mu^{\frac{1}{2-2^*_h}} }{\kappa_{2^*_h}^{\frac{2^*_h}{2^*_h-2}}}\right].
$$
}
\end{example}

\indent {\bf Acknowledgements.} The authors warmly thank the anonymous referee for her/his useful and nice
comments on the paper. The manuscript was realized within the auspices of the INdAM - GNAMPA Project 2016
titled {\it Problemi variazionali su variet\`a Riemanniane e gruppi di Carnot} and the SRA grant P1-0292, J1-5435 and J1-6721.

\end{document}